\title[Training NN Controllers Using CBF in Presence of Disturbances]{Training Neural Network Controllers Using Control Barrier Functions in the Presence of Disturbances}
\newtheorem{Definition}{Definition}[section]
\newtheorem{Lemma}{Lemma}[section]
\author{%
 \Name{Shakiba Yaghoubi} \Email{syaghoub@asu.edu}\\
 \addr CIDSE, Arizona State University, Tempe, AZ, USA 
 \AND
 \Name{Georgios Fainekos} \Email{fainekos@asu.edu}\\
 \addr CIDSE, Arizona State University, Tempe, AZ, USA
 \AND
 \Name{Sriram Sankaranarayanan} \Email{srirams@colorado.edu}\\
 \addr Computer Science, University of Colorado Boulder, Boulder, CO, USA
}
\begin{document}

\maketitle
\vspace{-10pt}
\begin{abstract}%
Control Barrier Functions (CBF) have been recently utilized in the design of provably safe feedback control laws for nonlinear systems. 
These feedback control methods typically compute the next control input by solving an online Quadratic Program (QP).
Solving QP in real-time can be a computationally expensive process for resource constraint systems.
In this work, we propose to use imitation learning to learn  Neural Network based feedback controllers which will satisfy the CBF constraints.
In the process, we also develop a new class of High Order CBF for systems under external disturbances.
We demonstrate the framework on a unicycle model subject to external disturbances, e.g., wind or currents.
\end{abstract}

\begin{keywords}%
  Barrier Function, Disturbance, Neural Network Controller, Imitation Learning  
\end{keywords}

\section{Introduction}

Control Barrier Functions (CBF) have enabled the design of provable safe feedback controllers for a number of different systems such as adaptive cruise control \cite{ames2014control}, bipedal robot walking and long term autonomy \cite{ames2019control}.
CBF - along with Control Lyapunov Functions (CLF) - are typically part of the constraints of a Quadratic Program (QP) whose solution computes control inputs that guarantee safe system operation (while stabilizing to a desired operating point).
The CBF theory has been instrumental in developing safety critical controllers for nonlinear systems; however, it also has some limitations. 
First and foremost, it requires the online solution of a QP, which typically cannot satisfy hard real time constraints.
Second, the resulting controller may not be robust to noise and parameter or model inaccuracies, and to the best of our knowledge, robust high order control barrier functions have not been studied before.

In this work, we propose to use Neural Network (NN) based feedback controllers to address the aforementioned challenges.
Shallow NN-based controllers require limited memory and computational power and, therefore, they can address problem one.
In addition, NN-based controllers can be trained using both simulated and real data.
NN-based controllers can tolerate model uncertainty and inaccuracies.

In particular, in this work, we make the following contributions. 
First, we extend results from CBF \cite{ames2019control} and High Order CBF (HOCBF) \cite{xiao2019control} to nonlinear systems with affine controls and external disturbances.
Second, we adapt an imitation learning algorithm (\cite{ross2011reduction}) to train an NN-based controller from examples generated by the QP-based controller.
Even though in these preliminary results, we did not use real data, we demonstrate that we can train the NN controller robustly over non-noisy system trajectories and apply the resulting controller to a system subject to external disturbances. 
 Finally, even though in this paper, we do not address the provable safety of the resulting NN controller under all possible initial conditions, in the future, we plan to use tools like Sherlock \cite{DuttaEtAl2019hscc,DuttaEtAl2018adhs} to do so. 

{\bf Related Work:}
Input-to-state safety of a set $C$ which ensures that trajectories of a nonlinear dynamical system in presence of disturbances stay close to the set $C$, has been proved by enforcing the invariance of a larger set including $C$ in \cite{kolathaya2018input}.

The application of NN to control dynamical systems has a long history \cite{HuntEtAl1992automatica,HaganEtAl2002ijrnc,SchumannL2010book}.
More recently, due to computational advances and available data, there has been a renewed interest in the utilization of NN in control systems. 
\cite{yaghoubi2019worst} and \cite{claviere2019trajectory} utilize counterexample (adversarial sample) exploration to train NN that seek to satisfy a given property expressed either in temporal logic or through a reference trajectory.
The work by \cite{TuncaliEtAl2018dac} attempts to learn through simulations barrier certificates that can establish the safe operation of the closed loop system with an NN controller.
On the other hand,  \cite{zhang2019near} and \cite{chen2018approximating} take a different approach: they approximate Model Predictive Controllers (MPC) using supervised reinforcement learning for an NN.
Here, instead of approximating MPC, we approximate the solution of a QP constrained by HOCBF.

\section{Preliminaries}
Consider a nonlinear control system without disturbances and with affine control inputs:
\begin{align}
    \label{eq1}
 \dot{x}=f(x)+g(x)u,
\end{align}
where $x\in X\subset \mathbb{R}^{n}$ is the system state,  $u\in U \subset \mathbb{R}^{m}$ is the control input, $f:\mathbb{R}^{n} \rightarrow \mathbb{R}^{n}$ and $g:\mathbb{R}^{n} \rightarrow \mathbb{R}^{m}$ are locally Lipschitz.  
A function $\alpha: \mathbb{R} \rightarrow \mathbb{R}$ is said to be an extended class $\mathcal{K}$ function iff $\alpha$ is strictly increasing and $\alpha(0) = 0$ (\cite{ames2019control}).


\begin{Definition}[Set Invariance \cite{blanchini1999set}]\label{def2}
A set $C \subseteq \mathbb{R}^n$ is forward invariant w.r.t the system (\ref{eq1}) iff  for every $x(0) \in C$, its solution satisfies $x(t)\in C$ for all $t \geq 0$. 
\end{Definition}

\begin{Definition}[Barrier Function]\label{defBF}
Let $h:X \rightarrow \mathbb{R}$ be a continuously differentiable function, $C: \ \{ x \in X | h(x) \geq 0 \}$ and $\alpha$ be a class $\mathcal{K}$ function.
 $h$ is a \emph{barrier function} iff
\begin{equation}\label{BF}
\dot{h}(x)\geq -\alpha(h(x))
\end{equation}
\end{Definition}

\begin{Lemma}[\cite{glotfelter2017nonsmooth}]\label{lemma1}
 If $h$ is a barrier function with $C$, and $\alpha$ as defined in Def.~\ref{defBF} then $C$ is a forward invariant set.
\end{Lemma}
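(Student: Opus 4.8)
The plan is to prove forward invariance by a comparison argument for scalar differential inequalities. Fix an initial condition $x(0) \in C$, so that $h(x(0)) \geq 0$, and let $x(t)$ be the corresponding solution of (\ref{eq1}). First I would introduce the scalar function $y(t) := h(x(t))$. Since $h$ is continuously differentiable and $x(t)$ is a solution of (\ref{eq1}), $y$ is differentiable along the trajectory, and the barrier condition (\ref{BF}) translates directly into the differential inequality $\dot{y}(t) \geq -\alpha(y(t))$ with $y(0) \geq 0$. The goal is then to show that $y(t) \geq 0$ for all $t \geq 0$, which is exactly the statement $x(t) \in C$ required by Definition~\ref{def2}.

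Next I would compare $y$ against the scalar comparison system $\dot{z} = -\alpha(z)$ initialized at $z(0) = h(x(0)) \geq 0$. The crucial structural fact is that, because $\alpha$ is a class $\mathcal{K}$ function, $\alpha(0) = 0$, so $z \equiv 0$ is an equilibrium of the comparison system; moreover $-\alpha(z) < 0$ whenever $z > 0$ by strict monotonicity, so no solution starting from a nonnegative value can become negative, giving $z(t) \geq 0$ for all $t \geq 0$. The comparison lemma applied to $\dot{y} \geq -\alpha(y)$ and $\dot{z} = -\alpha(z)$ with $y(0) = z(0)$ then yields $y(t) \geq z(t)$ for all $t \geq 0$. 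Chaining the two inequalities gives $h(x(t)) = y(t) \geq z(t) \geq 0$, hence $x(t) \in C$ for all $t \geq 0$, establishing forward invariance.

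The main obstacle is a regularity subtlety concealed in the comparison step. The textbook comparison lemma, together with the uniqueness needed to guarantee that a solution cannot escape through the equilibrium $z = 0$, requires $-\alpha$ to be locally Lipschitz, whereas a class $\mathcal{K}$ function is only assumed continuous and strictly increasing. I would resolve this either by strengthening the hypothesis to locally Lipschitz $\alpha$ (as is standard in the CBF literature), or by invoking the generalized comparison principle for merely continuous right-hand sides and comparing $y$ against the maximal solution of the comparison system; in the latter case the nonnegativity of every solution emanating from $z(0) \geq 0$ still follows from $\alpha(0) = 0$ and monotonicity, so the conclusion is unaffected. A direct first-exit argument is tempting as an elementary alternative, but it is delicate: at the last boundary time $t_0$ where $h(x(t_0)) = 0$, condition (\ref{BF}) only yields $\dot{h}(x(t_0)) \geq 0$, which does not by itself forbid $h$ from decreasing through zero along a tangential touch, so ruling that case out essentially rebuilds the comparison argument. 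For this reason I would present the comparison-lemma proof as the primary route.
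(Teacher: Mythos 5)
The paper does not prove this lemma at all: it is imported by citation from \cite{glotfelter2017nonsmooth}, so there is no in-paper argument to compare against. Your comparison-lemma proof is the standard route to this result and is essentially correct, and you are right to flag the regularity issue --- a class $\mathcal{K}$ function is only continuous and strictly increasing, so either one strengthens to locally Lipschitz $\alpha$ or one uses the comparison principle for continuous right-hand sides. One small technical slip: for a differential inequality of the form $\dot{y} \geq -\alpha(y)$ the generalized comparison principle bounds $y$ from below by the \emph{minimal} solution of $\dot{z} = -\alpha(z)$, not the maximal one. This does not damage your conclusion, because the argument you give for nonnegativity of the comparison system (for $z<0$ one has $-\alpha(z)>0$, so no solution can cross zero downward) applies to every solution, minimal or maximal, emanating from $z(0)\geq 0$; but the lemma should be invoked with the minimal solution. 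Your observation that the naive ``first exit time'' argument is delicate, since $\dot{h}\geq 0$ at a boundary touch does not by itself preclude crossing, is also well taken and is precisely why the nonsmooth-analysis treatment in the cited reference exists.
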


\begin{Definition}[Control Barrier Function \cite{ames2019control}]
A continuous, differentiable function $h(x)$ is a Control Barrier Function (CBF) for the system (\ref{eq1}), if there exist a class $\mathcal{K}$ function $\alpha$ such that for all $x\in C$ :\vspace{-1pt}
\begin{equation}\label{CBF}
L_f h(x) +L_g h(x) u +\alpha(h(x))\geq 0 
\end{equation}
where $L_f h(x) = \frac{\partial h}{\partial x}^\top f(x), L_g h(x)= \frac{\partial h}{\partial x}^\top g(x)$ are the first order Lie derivatives of the system.
Any Lipschitz continuous controller $u \in K_{cbf}(x) = \{u\in U\;|\;L_f h(x) +L_g h(x) u +\alpha(h(x))\geq 0\}$ results in a forward invariant set $C$ for the system of Eq. (\ref{eq1}).
\end{Definition}
\begin{Definition}[Relative Degree of a Function]
A continuously differentiable function $h$ has a relative degree $m$ w.r.t the system (\ref{eq1}), if the first time that the control $u$ appears in the derivatives of $h$ along the system dynamics is in its $m^{th}$ derivative. 
\end{Definition}

If the function $h$ has a relative degree $m>1$, $L_g h(x) = L^{m-1}_g h(x) = 0$. As a result Eq. (\ref{CBF}) cannot be directly used for choosing safe controllers $u \in K_{cbf}(x)$. High Order Control Barrier Functions (HOCBF) were introduced in \cite{nguyen2016exponential}, and \cite{xiao2019control} to derive necessary conditions for guaranteeing the invariance of the set $C$. Assuming that the function $h$ has a relative degree $m$ w.r.t the system (\ref{eq1}), define the series of functions $ \psi_i:\mathbb{R}^n\rightarrow \mathbb{R}, i =1,\cdots,m$ and their corresponding sets $C_1,\cdots,C_m$ as follows:  
\begin{align}\label{series}
       \psi_0 (x)& = h(x)\qquad \qquad \qquad \qquad \qquad \qquad C_1 = \{x\;|\; \psi_0 (x)\geq0\}\nonumber\\
      \psi_1 (x)& = \dot{ \psi}_0 (x)+\alpha_1( \psi_0 (x)) \qquad \qquad \qquad  C_2 = \{x\;|\; \psi_1 (x)\geq0\}\\[-7pt]
      \vdots&\qquad \qquad \quad \qquad \qquad \qquad \qquad \qquad \qquad\vdots \nonumber\\[-7pt]
      \psi_m (x)& = \dot{ \psi}_{m-1} (x)+\alpha_m( \psi_{m-1} (x)) \qquad \quad  C_m = \{x\;|\; \psi_{m-1} (x)\geq0\}\nonumber
\end{align}
where $\alpha_1,\alpha_2\cdots, \alpha_m $ are class  $\mathcal{K}$ functions of their arguments. 
\begin{Definition}[High Order Barrier Functions]
A function $h:\mathbb{R}^n\rightarrow \mathbb{R}$ with a relative degree $m$ is a High Order Barrier Function (HOBF) for system (\ref{eq1}), if there exist differentiable class $\mathcal{K}$ functions $\alpha_1,\alpha_2\cdots, \alpha_m $ such that for all $x\in C_1\cap C_2\cap\cdots \cap C_m$, we have:   $\psi_m (x)\geq 0$. Under this condition, the set $C_1\cap C_2\cap\cdots \cap C_m$ is forward invariant.
\end{Definition}
\begin{Definition}[High Order Control Barrier Functions \cite{xiao2019control}]
A function $h: \mathbb{R}^n\rightarrow \mathbb{R}$ with a relative degree $m$ is a High Order Control Barrier Function (HOCBF) for system (\ref{eq1}), if there exist differentiable class $\mathcal{K}$ functions $\alpha_1,\alpha_2\cdots, \alpha_m $ such that for all $x\in C_1\cap C_2\cap\cdots \cap C_m$:
\begin{equation}\label{HOCBF}
    L_f^m h(x)+L_g L_f^{m-1} h(x) u + O(h(x))+ \alpha_m(\psi_{m-1} (x))\geq 0
\end{equation}
where $O(.)$ denotes the remaining Lie derivatives
along $f$ with degree less than or equal to $m-1$. Any controller $u \in K_{hocbf}(x) = \{u\in U \;|\;  L_f^m h(x)+L_g L_f^{m-1} h(x) u + O(h(x))+ \alpha_m(\psi_{m-1} (x))\geq 0\}$ renders the system safe, and the set $C_1\cap C_2\cap\cdots \cap C_m$ forward invariant.
\end{Definition}
\section{Control Barrier Functions in presence of Disturbance}

In this paper, the nonlinear control system (\ref{eq1}) is considered in presence of disturbances:
\begin{align}
    \label{eq:dist}
 \dot{x}=f(x)+g(x)u+Mw, \quad x(0)\in X_0
\end{align}
where $x,u,f,g$ are defined as for the system of Eq. (\ref{eq1}), $X_0$ is the set of initial conditions, $w\in W\subset \mathbb{R}^{l}$ is the disturbance input, each dimension of $W$ which we denote by $W_i$ defines an interval  $[\underline{w_i},\overline{w_i}]$ that the $i^{th}$ element of w belongs to, $M$ is a $n\times l$ zero-one matrix with at most one non-zero element in each row. We assume that $w$ is Lipchitz continuous. If $u$ is also Lipchitz, the solutions $x(t), t>0$ to the system (\ref{eq:dist}) are forward complete.

When disturbance is present, in order to guarantee the forward invariance of the set $C$, which we call the safe set, the condition in inequality (\ref{BF}) needs to be satisfied for all $w\in W$, including its worst case where it minimizes the left hand side of the inequality.

\begin{Definition}\label{PCBF}
The continuously differentiable function $h$ of relative degree one is a CBF in presence of Disturbance (CBFD) for the system of Eq. (\ref{eq:dist}), if there exist a class $\mathcal{K}$ function $\alpha$ such that for all $x\in C$ and $w\in W$, inequality \ref{jh} is satisfied or equivalently for all $x\in C$ inequality \ref{ll} holds in which $L_M h(x) = \frac{\partial h}{\partial x}^\top M$, and $w_{opt} =  \underset{w\in W}{\arg\max} \;(-L_M h(x) w)$
\begin{equation}\label{jh}
L_f h(x) +L_g h(x) u +L_M h(x) w+\alpha(h(x))\geq 0 
\end{equation}
\begin{equation}\label{ll}
L_f h(x) +L_g h(x) u +\alpha(h(x))\geq -L_M h(x) w_{opt} 
\end{equation}

\end{Definition}
Since $L_M h(x) w$ is linear in $w$, and $w\in W$ imposes linear constraints on $w$, $\max_{w\in W}(-L_M h(x) w)$ is a linear program for each $x\in C$ whose solution can be found and replaced in inequality (\ref{ll}) to define the set of control values that satisfy the following inequality:
\begin{equation}
   K_{cbfd}(x) = \{u\in U\;|\;L_f h(x) +L_g h(x) u +\alpha(h(x))\geq -L_M h(x) w_{opt} \}
\end{equation}\vspace{-8pt}
\begin{theorem}
Given a CBFD $h$ from Def. (\ref{PCBF}), any Lipchitz continuous controller $u \in K_{cbfd} (x)$ renders the set $C$ forward invariant.
\end{theorem}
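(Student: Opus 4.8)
The plan is to reduce the robust, "for all $w \in W$" requirement of Definition~\ref{PCBF} to the nominal barrier-function differential inequality of Definition~\ref{defBF}, and then to invoke Lemma~\ref{lemma1}. First I would fix an arbitrary initial state $x(0) \in C$, an admissible disturbance signal $w(\cdot)$ with $w(t) \in W$ for all $t$, and a Lipschitz continuous controller $u \in K_{cbfd}(x)$. Since $u$ and $w$ are both Lipschitz, the closed-loop solution $x(t)$ of (\ref{eq:dist}) exists and is forward complete, so by Definition~\ref{def2} it suffices to show that $x(t) \in C$, i.e. $h(x(t)) \geq 0$, for all $t \geq 0$.

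The key step is the worst-case domination. By definition $w_{opt} = \arg\max_{w\in W}(-L_M h(x) w)$, hence $-L_M h(x)\, w_{opt} \geq -L_M h(x)\, w$ for every $w \in W$, and in particular for the realized value $w(t)$. Combining this with the membership $u \in K_{cbfd}(x)$, which supplies $L_f h(x) + L_g h(x) u + \alpha(h(x)) \geq -L_M h(x)\, w_{opt}$, I obtain along the trajectory
\begin{equation*}
L_f h(x) + L_g h(x) u + L_M h(x)\, w(t) + \alpha(h(x)) \geq 0 ,
\end{equation*}
which is exactly inequality (\ref{jh}) instantiated at the actual disturbance.

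Next I would recognize the first three terms as the total derivative of $h$ along the disturbed dynamics: differentiating $h$ along (\ref{eq:dist}) gives $\dot h = \frac{\partial h}{\partial x}^\top\bigl(f(x) + g(x) u + M w(t)\bigr) = L_f h(x) + L_g h(x) u + L_M h(x)\, w(t)$. Substituting into the previous inequality yields $\dot h(x(t)) \geq -\alpha(h(x(t)))$, which is precisely the barrier-function condition (\ref{BF}) of Definition~\ref{defBF} holding along the closed-loop trajectory. Applying Lemma~\ref{lemma1} (equivalently, the comparison-lemma argument underlying it, applied to the scalar inequality $\dot h \geq -\alpha(h)$) then forces $h(x(t)) \geq 0$ whenever $h(x(0)) \geq 0$, establishing forward invariance of $C$.

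The main obstacle is conceptual rather than computational: one must check that the pointwise worst-case value $w_{opt}$, defined state-by-state through the linear program of Definition~\ref{PCBF}, genuinely dominates the realized disturbance $w(t)$ at every instant along the trajectory. This holds because the inequality $-L_M h(x)\, w_{opt} \geq -L_M h(x)\, w$ is valid for each fixed $x$ and every $w \in W$, and $w(t) \in W$ throughout, so the reduction is legitimate uniformly in time. A secondary point to handle with care is that Lemma~\ref{lemma1} is stated for the nominal system (\ref{eq1}), whereas I apply it to the disturbed closed loop; but the lemma only uses the scalar differential inequality $\dot h \geq -\alpha(h)$ along the solution, which I have established, so its proof transfers verbatim.
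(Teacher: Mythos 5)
Your proposal is correct and follows essentially the same route as the paper's own proof: reduce to the scalar differential inequality $\dot h \geq -\alpha(h)$ along the disturbed closed-loop trajectory via the worst-case domination $-L_M h(x)\,w_{opt} \geq -L_M h(x)\,w$, then invoke Lemma~\ref{lemma1}. Your version merely spells out the intermediate steps (and the transfer of Lemma~\ref{lemma1} to the disturbed system) that the paper leaves implicit.
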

\begin{proof}
The proof can be directly derived from Lemma \ref{lemma1}. To be explicit, if for all $x\in C$ and $w\in W$, $\dot {h}(x) = L_f h(x) +L_g h(x) u +L_M h(x) w \geq -\alpha(h(x))$, then the solutions to system (\ref{eq:dist}) with $x(0) \in C$, satisfy $h(x(t))\geq 0$. So based on Def. \ref{def2}, $C$ is forward invariant. 
\end{proof}

If the function $h$ has a relative degree higher than one, the multiplier of $u$ in Eq. (\ref{ll}), $L_g h(x)$ is equal to zero, so the  choice of $u$ will not affect the satisfaction of inequality. (\ref{ll}). In the following section we will study HOCBFs in presence of disturbance.


\section{High order Control Barrier Functions in Presence of Disturbance} 
Assume that the continously differentiable function $h:\mathbb{R}^n\rightarrow \mathbb{R}$ has relative degree $m$ and consider the series of functions $\psi_i:\mathbb{R}^n\rightarrow \mathbb{R}, i =1,...,m$ and their corresponding sets $C_1, \ldots, C_m$ as defined in Eq. (\ref{series}). 

\begin{Definition}
The function $h$ is a High Order Barrier Function in presence of disturbance (HOBFD) for system (\ref{eq:dist}), if there exist differentiable class $\mathcal{K}$ functions $\alpha_1,\alpha_2, \ldots, \alpha_m $ that define the functions $\psi_1,\cdots,\psi_m$, such that for all $x\in C_1\cap C_2\cap\cdots \cap C_m$, we have:  $\psi_m (x)\geq 0$
\end{Definition}

\begin{Definition}\label{sec:HOCBFD}
The function $h$ is a High Order Control Barrier Function in presence of disturbance (HOCBFD) for system (\ref{eq:dist}), if there exist differentiable class $\mathcal{K}$ functions $\alpha_1,..., \alpha_m $ that define the functions $\psi_1,...,\psi_m$, s.t for all $x\in C_1\cap C_2\cap... \cap C_m$ and $w\in W$:
\begin{equation}\label{HOCBFD}
\psi_m (x)=L_f^m h(x)+L_g L_f^{m-1} h(x) u + P(x,w) +O(h(x))+ \alpha_m(\psi_{m-1} (x))\geq 0
\end{equation}
where $P(x,w)$ is a function of the $x$ and $w$, $O(.)$ denotes the remaining Lie derivatives along $f$ with degree less than or equal to $m-1$.
Since equation (\ref{HOCBFD}) needs to be satisafied for all $w\in W$, we can equivalently write it as:
\begin{equation}\label{HOCBD2}
L_f^m h(x)+L_g L_f^{m-1} h(x) u +O(h(x))+ \alpha_m(\psi_{m-1} (x))\geq  -P(x,w_{opt})
\end{equation}
where $w_{opt} =  \underset{w\in W}{\arg\max} \;(-P(x,w))$.
\end{Definition}

The set of control inputs that satisfy inequality (\ref{HOCBD2}) is:
\begin{equation*}
   K_{hocbfd}(x) = \{u\in U\;|\;L_f^m h(x) +L_g L_f^{m-1} h(x) u +O(h(x))+ \alpha_m(\psi_{m-1} (x))\geq  -P(x,w_{opt})\}
\end{equation*}

\begin{theorem}
Given a HOCBFD $h$ from Def. (\ref{sec:HOCBFD}), any Lipchitz continuous controller $u \in K_{hocbfd} (x)$ renders the set  $C_1\cap C_2\cap\cdots \cap C_m$ forward invariant.
\end{theorem}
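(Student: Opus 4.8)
The plan is to reduce the high-order claim to a cascade of applications of Lemma~\ref{lemma1}, mirroring the relative-degree-one argument used for the CBFD theorem above, with the disturbance absorbed through the worst-case term $P(x,w_{opt})$. Write $S = C_1\cap C_2\cap\cdots\cap C_m$. The first step is to check that the controller condition, which is stated for the adversarial disturbance, actually certifies $\psi_m\ge 0$ along the true disturbed dynamics: since $w_{opt}=\arg\max_{w\in W}(-P(x,w))$, we have $-P(x,w_{opt})\ge -P(x,w)$ for every admissible $w$, so any $u\in K_{hocbfd}(x)$ satisfying~(\ref{HOCBD2}) automatically satisfies $L_f^m h + L_gL_f^{m-1}h\,u + P(x,w)+O(h)+\alpha_m(\psi_{m-1})\ge 0$, i.e.\ $\psi_m(x)\ge 0$, for the realized disturbance $w\in W$. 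Thus, as long as the trajectory remains in $S$, we have $\psi_m(x(t))\ge 0$.

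Next I would read $\psi_m\ge 0$ as a barrier inequality for $\psi_{m-1}$. By~(\ref{series}), $\psi_m = \dot\psi_{m-1}+\alpha_m(\psi_{m-1})$, where $\dot\psi_{m-1}$ is the derivative taken along the actual (disturbed) dynamics; hence $\psi_m(x(t))\ge 0$ is literally $\dot\psi_{m-1}\ge -\alpha_m(\psi_{m-1})$. With $x(0)\in C_m$, i.e.\ $\psi_{m-1}(x(0))\ge 0$, Lemma~\ref{lemma1} (equivalently, comparison against $\dot y=-\alpha_m(y),\,y(0)\ge 0$) keeps $\psi_{m-1}(x(t))\ge 0$. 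The point is that each remaining link is purely structural: $\psi_{m-1}(x(t))\ge 0$ is, again by~(\ref{series}), exactly $\dot\psi_{m-2}\ge -\alpha_{m-1}(\psi_{m-2})$, which together with $x(0)\in C_{m-1}$ and Lemma~\ref{lemma1} yields $\psi_{m-2}(x(t))\ge 0$; descending this chain down to $\psi_0=h$ shows every $\psi_i(x(t))\ge 0$, i.e.\ $x(t)\in S$.

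To make the logic non-circular I would run this cascade inside a first-exit-time argument. Assume $x(0)\in S$ and let $t^\star=\inf\{t>0:\,x(t)\notin S\}$; on $[0,t^\star)$ the trajectory lies in $S$, so the controller certificate from the first step is available there and the whole descending chain of the second step is valid on $[0,t^\star)$. This forces $\psi_i(x(t))\ge 0$ for all $i$ on $[0,t^\star)$, and by continuity of the $\psi_i$ and closedness of the $C_i$ we obtain $x(t^\star)\in S$, contradicting the definition of $t^\star$ as an exit time. Hence no finite exit time exists, $x(t)\in S$ for all $t\ge 0$, and $S$ is forward invariant by Def.~\ref{def2}.

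I expect the only genuine difficulty to be this circularity: the certificate $\psi_m\ge 0$ is guaranteed only on the intersection $S$ that we are trying to prove invariant, whereas Lemma~\ref{lemma1} wants the barrier inequality to hold along the whole trajectory. The exit-time device resolves it, and the reason the cascade closes is that only the topmost inequality ($\psi_m\ge 0$) depends on $u$, $w$, and membership in $S$; every lower step is an identity from~(\ref{series}) combined with the initial membership $x(0)\in C_i$, so no further controller or disturbance assumption is needed as we descend. The disturbance itself poses no extra obstacle beyond the worst-case reduction already recorded in the first step.
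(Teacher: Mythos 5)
Your proof follows essentially the same route as the paper: the worst-case reduction $-P(x,w_{opt})\ge -P(x,w)$ gives $\psi_m\ge 0$ along the disturbed dynamics, and repeated application of Lemma~\ref{lemma1} descends the chain $\psi_{m-1}\ge 0,\ \psi_{m-2}\ge 0,\dots,\psi_0=h\ge 0$ using the initial memberships $x(0)\in C_i$. Your first-exit-time device addresses a circularity (the certificate only holds while the trajectory remains in the intersection) that the paper's proof leaves implicit, so your version is, if anything, the more careful of the two.
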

\begin{proof}
Any Lipchitz controller $u \in K_{hocbfd} (x)$ enforces $\psi_m (x)\geq 0$ or equivalently $\dot{\psi}_{m-1} (x)\geq -\alpha_m({\psi}_{m-1}(x))$, irrespective of the value of $w\in W$.  Assuming that $x(0) \in C_1\cap C_2\cap\cdots \cap C_m$, and hence $x(0) \in C_{m}$, we have ${\psi}_{m-1}(x(0))\geq0$ which based on lemma \ref{lemma1}, lead to  ${\psi}_{m-1}(x)\geq0$ ($x\in C_{m}$) or equivalently 
$\dot{\psi}_{m-2} (x)\geq -\alpha_{m-1}({\psi}_{m-2}(x))$, again since  $x(0) \in C_{m-1}$ this results in ${\psi}_{m-2}(x)\geq 0$ ($x\in C_{m-1}$). Continuing this reasoning, we can prove that $C_1\cap C_2\cap\cdots \cap C_m$ is forward invariant.
\end{proof}
\begin{remark}
In order to use HOCBFDs to prove that all the trajectories of the system \ref{eq:dist} starting from $X_0$ will never exit $C_1$, the sets $C_1, C_2,\cdots, C_m $ should have a nonempty interior, and the set of initial conditions of the system, $X_0$, should be a subset of $C_1\cap C_2\cap\cdots \cap C_m$. Note that if $X_0 \subset C_1$ ($h(x(0)) \geq 0$), except for special cases (see \cite{xiao2019control}) which we do not consider here, we can always choose $\alpha_1,\alpha_2\cdots, \alpha_m $ such that $x_0\in C_2\cap\cdots \cap C_m$.
\end{remark}
Note that the problem $\underset{w\in W}{\mbox{max}} \;(-P(x,w))$, is in general a nonlinear program and finding its optimal - or even suboptimal - solution can be time consuming. A special case of the problem is if we consider the linear class $\mathcal{K}$ functions $\alpha_1,\cdots, \alpha_m $ which will form Exponential Control Barrier Functions \cite{ames2019control}. This makes $P(x,w)$ a polynomial function of degree $m$ in $w$. In case of polynomial functions $\alpha_1,\cdots, \alpha_m$, $P(x,w)$ will be a polynomial function of $w$ - potentially of higher degree than $m$.

When $m = 2$, and $\alpha_1,\cdots, \alpha_m$ are linear functions, $P_m(x,w)$ is a quadratic function of $w$, and  $\underset{w\in W}{\mbox{max}} \;(-P_m(x,w))$ is a QP for each $x\in X$ that can be solved efficiently. 

\begin{example}
Consider the system $\dot{x}_1=x_2+w$, $\dot{x}_2=u$ with $ w\in [\underline{w},\overline{w}]$. The control input should be designed such that the function $h(x) = x_1^2-1$ is a HOCBFD. We consider $\alpha_i (y)= y,i = 1,2$, so we have $\alpha'_i (y)= \frac{\partial \alpha_i}{\partial y} = 1$, and as a result: 
\begin{align*}
        \psi_2 (x) &= \ddot{h}(x)+\alpha_1'(h(x))\dot{h}(x)+\alpha_2(\dot{h}(x)+\alpha_1(h(x))) \\
        &= 2x_1 u+\underbrace{(4x_2+4x_1)w+2w^2}_{P(x,w)}+2x_2^2+4x_1x_2+x_1^2-1  
\end{align*}
observing that $w_{opt} =  \underset{\underline{w}<w<\overline{w}}{\arg\max} \;(-2w^2-(4x_2+4x_1)w)$ is a quadratic program that can be solved at each $x$, any Lipchitz controller in the set $K_{hocbfd}(x) = \{2x_1 u+2x_2^2+4x_1x_2+x_1^2-1>  -2w_{opt}^2-(4x_2+4x_1)w_{opt}\}$ will make $C = \{x\;|\;h(x)\geq0\}$ forward invariant.

\end{example}
\section{Control Optimization Problem with CBF constraints}\label{sec5}
In order to find safe sub-optimal controllers, many recent works \cite{lindemann2019control,xiao2019control,ames2014control, yang2019sampling}, formulate optimization problems with quadratic costs in the control input $u$ subject to CLF and CBF constraints (each CBF constraints corresponds to an unsafe set) which are linear in $u$. These QPs are solved every time new information about the states $x$ are received, and the resulting control value $u$ is used in the time period before new information is received. In presence of disturbances, in order to formulate the QPs with constraints of type (\ref{ll}) or (\ref{HOCBD2}), $w_{opt}$ should be computed as a prerequisite. To compute $w_{opt}$ one need to solve $\underset{w\in W}{\mbox{max}} \;(-L_M h(x) w)$ or $\underset{w\in W}{\mbox{max}} \;(-P(x,w))$ - depending on the relative degree $m$ - for each barrier function or unsafe set. As a result, formulating the quadratic program and solving it for evaluating the control input $u$ may not be possible at run-time. In the following section, we present a paradigm for training NN controllers that predict the value of the control input resulting from the quadratic programs. 

\section{Learning NN Controllers from Control Barrier Functions Using the DAGGER Algorithm} 
Imitation learning methods, which use expert demonstrations of good behavior to learn controllers, have proven to be very useful in practice \cite{ho2016generative, abbeel2004apprenticeship,bagnell2007boosting,reddy2019sqil,song2018multi}. While a typical method to imitation learning is to train a classifier/regressor to predict an expert's behavior given data from the encountered observations and expert's actions in them, it's been shown in \cite{ross2011reduction} that using this framework,  small errors made by the learner can lead to large errors over time. The reason is that in this scenario, the learner can encounter completely different observations than those it has been trained with, leading to error accumulation. Motivated by this, \cite{ross2011reduction} presents an algorithm called DAGGER (Dataset Aggregation) that iteratively updates the training dataset with new observations encountered by the learner and their corresponding expert's actions and retrains the learner.  

As described in Section \ref{sec5}, forming and solving the required quadratic programs may not be feasible at run-time. As a result, we use an algorithm inspired by the DAGGER algorithm to train NN controllers that predict the outcome of the quadratic program.
In this regard, the QP acts as an expert that a NN imitates. An NN controller that has been trained offline can be used in a feedback loop to produce the desired control values online. The NN training algorithm is described in Alg. \ref{alg} in which it is assumed that $\pi^*(x)$ is an expert that performs the QP routine at $x$ to output the desired control value. 

\begin{algorithm2e}[h]
\small{ \KwData{The dynamical system \ref{eq:dist}, $W$, the set of initial conditions $X_0$, the constant $0<p<1$, maximum number of iterations $N$} 
 Randomly choose the set $X_0^s$ by sampling from $X_0$ \;
Sample trajectories of the system \ref{eq:dist} with initial conditions in $X_0^s$ and input $\pi_0 = \pi^*(x)$\;
 Initialize $D$ with the pairs of visited states and corresponding control inputs: $D = {(x, \pi^*(x))}$\;
 Train NN controller $\hat{\pi}_1$ on $D$\;
     \For{$i = 1,...,N$}{
     $\beta = p^{i}$\;
     Sample trajectories of the system \ref{eq:dist} with $x(0)\in X_0^s$ and input $\pi_i=\beta\pi^*(x)+(1-\beta)\hat{\pi}_i(x)$\;
      Get dataset $D_i = {(x, \pi^*(x))}$ of  visited states and corresponding control inputs\;
      Aggregate datasets: $D \leftarrow D\cup D_i$\;
      Train NN controller $\hat{\pi}_i$ on $D$\;
  }}
 \Return the best $\hat{\pi}_i$ on validation; 
\caption{Data set Aggregation for training NN using Quadratic Programs}\label{alg}
\end{algorithm2e}\setlength{\textfloatsep}{7pt}

\section{Reach Avoid Problem of a Water Vehicle Model}
Consider the model of a surface water vehicle subject to wind gusts and water currents as:
\begin{equation}\label{uni-model}
  {\small  \dot{x} = \begin{bmatrix}
     \dot{x_1}\\
     \dot{x_2}\\
     \dot{\theta}
    \end{bmatrix}= \begin{bmatrix}
    v \cos(\theta)\\
    v \sin(\theta)\\
    0 
    \end{bmatrix}+\begin{bmatrix}
    0\\
    0\\
    1 
    \end{bmatrix}u+\begin{bmatrix}
    1\\
    1\\
    0 
    \end{bmatrix}w, \qquad x(0) \in X_0}
\end{equation}
where the state $x\in\mathbb{R}^3$ consists of vehicle location $(x_1,x_2)$ and the heading angle $\theta$. The control input $u\in \mathbb{R}$ is the vehicle's steering angle.
The velocity $v$ is assumed to be constant ($v = 1$) as it has a different relative degree from the steering angle $u$\footnote{Considering $v$ as an input  will make CBF constraints nonlinear in $v$, and the resulting problem will not be a quadratic program anymore. While this nonlinear program can be solved offline in this framework, in this paper we assume $v$ is constant for simplicity.}. The external disturbance is $w\in[-0.1,0.1]$. System trajectories starting from the set $X_0 = [8, 9]\times[5, 11]\times[-\pi, \pi]$ should avoid the unsafe sets $\mathcal{U}_i, i = 1,...,5$ and reach the goal set $\mathcal{G}$:
\begin{align*}
    \mathcal{U}_i =\{x: (x_1-p_i(1))^2+(x_2-p_i(2))^2<r_i\},\quad
\mathcal{G} = \{x: (x_1-x_{g,1})^2+(x_2-x_{g,2}))^2<0.3\}
\end{align*}
where $p_1 = (4, 2.5), r_1= 0.7, p_2 = (5, 6.5), r_2 = 0.5, p_3 =(7, 4.75), r_3 = 0.4, p_4 = (2.5, 5), r_4 = 0.3, p_5= (7.5, 2.5), r_5 = 0.5$, and $x_{g,1} = x_{g,2} = 1$.

In order to reach the goal set, instead of using CLF based constraints, we formulate the stabilizing condition in the objective function. The desired heading angle is $\theta_{ref}(x) = \mbox{arctan}(\frac{x_{g,2}-x_2}{x_{g,1}-x_1})$, and the desired input $u$ to force $\theta$ to follow $\theta_{ref}$ is $u_{ref}(x) = K(\theta_{ref}(x)-\theta)$ where $K$ is a positive constant, here we choose $K=1$. The barrier function corresponding to the unsafe set $\mathcal{U}_i$ is $h_i (x) =   (x_1-p_i(1))^2+(x_2-p_i(2))^2-r_i$ which has relative degree 2 w.r.t to the steering angle $u$. We consider $\alpha_1 (y)= \alpha_2(y) = 2y$. The function $\dot{\psi}_{2,i}$ corresponding to each $h_i$ can be computed based on Eq. (\ref{series}) using Matlab's Symbolic toolbox, for example:
\begin{align*}\vspace{-3pt}
   \dot{\psi}_{2,1} =& -(2\sin(\theta)(x_1 - 4) - 2\cos(\theta)(x_2 - 2.5))u  \qquad\leftarrow L_g L_f h_1(x) u\\
   &+4w^2+ 4(\cos(\theta)+\sin(\theta)+2((x_1-4)+(x_2-2.5)))w\qquad\leftarrow P_1(x,w)\\
   & +4(x_1 - 4)^2 + 4(x_2 - 2.5)^2 + 4(\cos(\theta))(2x_1 - 8) + 4(\sin(\theta))(2x_2 - 5) - 0.8
\end{align*}\vspace{-3pt}

The functions $P_i(x,w)$ corresponding to each unsafe set are quadratic in $w$. Take $w_{opt,i} (x)= \underset{-0.1<w<0.1}{\arg\max} (-P_i(x,w))$ which needs to be solved for each unsafe set at each state. Also, let's call the portion of $\dot{\psi}_{2,i}$ that only depends on $x$,  $\Psi_{i}$. Note that $\Psi_{i} (x)= L_f^2 h_i(x)+O(h(x))+ \alpha_2(\psi_{1} (x)) $. As a result, in order to reach the goal set while avoiding the unsafe sets, the following quadratic program needs to be solved:
\begin{align}\label{uni-QP}
   & \min_{u} \quad (u-u_{ref}(x))^2\\
    s.t  \quad
 L_g L_f h_i(x) &u+\Psi_{i}(x)\geq  -P(x,w_{opt,i}(x)) \quad\forall i = 1,...,5\nonumber
\end{align}
\begin{figure}[t]
  \centering
  \subfigure[]{\includegraphics[scale=0.65]{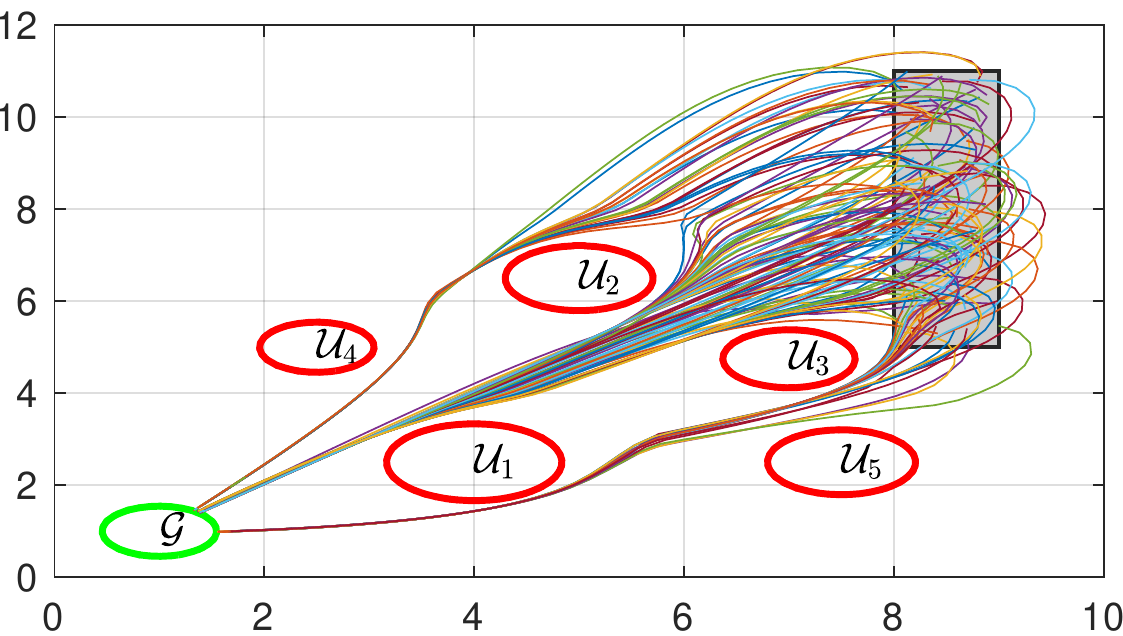}}\vspace{-5pt}
  \subfigure[]{\includegraphics[scale=0.65]{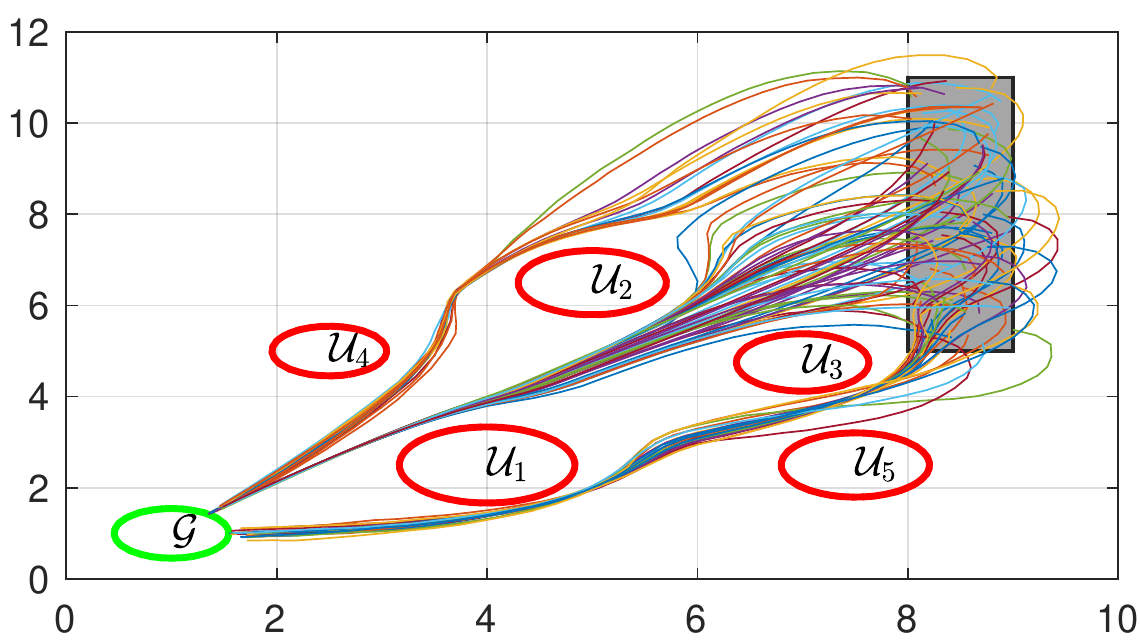}}\vspace{-5pt}
  \caption{Trajectories initiated from $X_0^s$ as guided by ({\it a}) the QPs as expert when $w=0$ and ({\it b}) the trained NN controller when random disturbance is applied to system}\vspace{-13pt}
  \label{fig}
\end{figure}
This QP is solved at each state visited by the vehicle under the controller $\pi_i$ until reaching the goal set $\mathcal{G}$, as described in Alg. \ref{alg} to train NN controllers that can predict the expert's action online. Figure \ref{fig}.({\it a}) shows the trajectories of the system (\ref{uni-model}) guided by the solutions to QPs in Eq. (\ref{uni-QP}) when $w = 0$. The NN controller successfully imitates the QPs at the $11^{th}$ iteration of the for loop in Alg. \ref{alg}. Figure \ref{fig}.({\it b}) shows the system trajectories guided by the trained NN controller when randomized disturbance is applied to the system. As it is clear from the figures the controller is robust to disturbances as it has been trained with controllers that are able to compensate for the disturbance in the worst-case. It is worth mentioning that the inputs to the NN are the location states $(x_1,x_2)$ in addition to $(\sin(\theta),\cos(\theta))$ - instead of the state $\theta$ itself. This data processing helps remove the discontinuities than happen when mapping $\theta$ to $[-\pi,\pi]$ and helps NN understand that $-\pi$ and $\pi$ are indeed equivalent.
Also, even-though input constraints are not enforced in this example, they can be added to problem (\ref{uni-QP}) as linear constraints and considered in the NN architecture by adding a saturation function in the output.


\section{Conclusions}
In this work, we studied Control Barrier Functions (CBF) in presence of disturbances. 
These functions define constraints on the control input that can be used in an optimization problem to find safe sub-optimal control inputs. 
As solving these optimization problems might not be possible in real-time, we presented a framework to train NN controllers that can be used online to predict the outcome of the optimization problems.
Future work will use methods like \cite{DuttaEtAl2018adhs} to establish safety of the learned controller and counter-example generation methods as in \cite{YaghoubiF2019hscc} to speed up training. 
\vspace{-2pt}

\acks{This work was partially funded by NSF CNS 1932068, NSF IIP 1361926 and the NSF I/UCRC Center for Embedded Systems.}

\bibliography{Ref}

\end{document}